\date{}
\begin{document}

\centerline{}

\centerline {\Large{\bf  On minimal ring extensions }}

\centerline{}

\centerline{\bf {Rahul Kumar\footnote{rahulkmr977@gmail.com} \& Atul
Gaur\footnote{gaursatul@gmail.com}}}

\centerline{Department of Mathematics}

\centerline{University of Delhi, Delhi, India.}

\centerline{}

\newtheorem{Theorem}{\quad Theorem}[section]

\newtheorem{Corollary}[Theorem]{\quad Corollary}

\newtheorem{Lemma}[Theorem]{\quad Lemma}

\newtheorem{Proposition}[Theorem]{\quad Proposition}

\theoremstyle{definition}

\newtheorem{Definition}[Theorem]{\quad Definition}

\newtheorem{Example}[Theorem]{\quad Example}

\newtheorem{Remark}[Theorem]{\quad Remark}

\begin{abstract} Let $R$ be a commutative ring with identity. The ring $R\times R$ can be viewed as an extension of $R$ via the diagonal map $\Delta: R \hookrightarrow R\times R$, given by $\Delta(r) = (r, r)$ for all $r\in R$. It is shown that, for any $a, b\in R$, the extension $\Delta(R)[(a,b)] \subset R\times R$ is a minimal ring extension if and only if the ideal $<a-b>$ is a maximal ideal of $R$. A complete classification of maximal subrings of $R(+)R$ is also given. The minimal ring extension of a von Neumann regular ring $R$ is either a von Neumann regular ring or the idealization $R(+)R/\mathfrak{m}$ where $\mathfrak{m}\in \text{Max}(R)$. If $R\subset T$ is a minimal ring extension and $T$ is an integral domain, then $(R:T) = 0$ if and only if $R$ is a field and $T$ is a minimal field extension of $R$, or $R_J$ is a valuation ring of altitude one and $T_{J}$ is its quotient field. 
\end{abstract}

{\bf Mathematics Subject Classification:}  Primary 13B99, Secondary \\13B25, 13F30\\

{\bf Keywords:} Minimal ring extension, von Neumann regular ring, valuation ring, flat epimorphism.

\section{Introduction}
All rings considered below are commutative with nonzero identity; all ring extensions, ring homomorphisms, and algebra homomorphisms are unital. For any ring $R$, let $\text{tq}(R)$ denotes the total quotient ring of $R$ and $\text{Max}(R)$ denotes the set of all maximal ideals of $R$. By an overring of $R$, we mean any subring of $\text{tq}(R)$ which contains $R$. For any ring extension $R\subseteq S$, the conductor $(R:S) := \{ s\in S~|~ sS\subseteq R \}$. By a local ring, we mean a ring with a unique maximal ideal. 

An injective ring homomorphism $f$ that is not an isomorphism is called a minimal ring homomorphism if any factorization $f=g\circ h$ entails that one of the ring homomorphisms $g,h$ is an isomorphism, see \cite{ferrand}. Let $R$ be any proper subring of a ring $T$. Then $T$ is called a minimal ring extension of $R$ or equivalently, $R$ is a maximal subring of $T$ if the inclusion map $R\hookrightarrow T$ is a minimal ring homomorphism, that is, if there is no ring $S$ such that $R\subset S\subset T$ where $\subset$ denotes proper inclusion. By a minimal overring of $R$, we mean any overring of $R$ which is a minimal ring extension of $R$. Note that if $R\subset T$ is a minimal ring extension, then either $R\subset T$ is an integral ring extension or $R\hookrightarrow T$ is a flat epimorphism, see \cite[Th\'{e}or\`{e}me~2.2]{ferrand}.
 
If $R$ is a ring, then $R$ can be viewed as a subring of $R\times R$ via the diagonal map, that is, via the canonical injective ring homomorphism, $\Delta: R \hookrightarrow R\times R$, given by $\Delta(r) = (r,r)$ for all $r\in R$. It was shown in \cite[Lemma~2.1]{dobbs1} that $\Delta(R)[\left(r,s\right)] = R\times R$ for $r, s \in R$ if and only if $r-s \in U(R)$, where $U(R)$ denote the set of units of $R$. Dobbs \cite[Proposition~2.2]{dobbs1} also proved that $\Delta(R) \subset R\times R$ is a minimal ring extension if and only if $R$ is a field. In Theorem \ref{t4}, we show that, for any $r, s\in R$, $\Delta(R)[\left(r,s\right)] \subset R\times R$ is a minimal ring extension if and only if the ideal $<r-s>$ is a maximal ideal of $R$.

If $R$ is a domain but not a field, then minimal ring extensions of $R$ are the $R$-algebras that are isomorphic to one of the following three types of rings: a minimal overring of $R$; an idealization $R\left(+\right) R/\mathfrak{m}$ where $\mathfrak{m}\in \text{Max}(R)$; a direct product $R\times R/\mathfrak{m}$ where $\mathfrak{m}\in \text{Max}(R)$, see \cite[Theorem~2.7]{dobbs2}. This result is generalized by assuming that $\text{tq}(R)$ is a von Neumann regular ring and $\text{Max}(R) \cap \text{Min}(R) = \phi$, see \cite[Corollary~2.5]{dobbs3}. Dobbs and Shapiro also classified the integral minimal ring extensions of $R$, see \cite[Proposition~2.12]{dobbs3}. In Theorem \ref{t1} and \ref{t2}, we classify the minimal ring extension of a von Neumann regular ring, and thereby settled the open problem posed by Dobbs in \cite[p. 35]{dobbs4}.

 Recall \cite[cf. Nagata, 1962, p.2]{nagata} that if $R$ is a ring and $E$ is an $R$-module, then the idealization $R(+)E$ is the ring defined as follows: Its additive structure is that of the abelian group $R \oplus E$, and its multiplication is defined by $\left(r_1,e_1\right) \left(r_2,e_2\right) := \left(r_1 r_2,r_1 e_2 + r_2 e_1\right)$ for all $ r_1, r_2\in R$ and $e_1, e_2 \in E$. It will be convenient to view $R$ as a subring  of $R(+)E$ via the canonical injective ring homomorphism that sends $r$ to $\left( r,0\right)$. Note that every ring has a minimal ring extension, see $\cite{dobbs1}$. However, $\mathbb{Z}$ has no maximal subring, that is, maximal subrings need not always exist. In Corollary $\ref{r1}$, we show that for any ring $R$, the ring $R(+)R$ has maximal subrings. In Proposition \ref{t5}, we prove that $R(+)Rb$ is a maximal subring of $R(+)R$ if and only if $Rb$ is a maximal ideal of $R$. 

Let $R\subset T$ be a minimal ring extension. By \cite[Th\'{e}or\`{e}me~2.2(i)]{ferrand} and \cite[Lemme~1.3]{ferrand}, there exists a unique maximal ideal $J$ of $R$ such that $R_J\hookrightarrow T_J:= T_{R\setminus J}$ is not an isomorphism; moreover, $R_J\hookrightarrow T_J$ is then a minimal ring extension, and $R_P\hookrightarrow T_P$ is an isomorphism for all $P\in \text{Spec}(R)\setminus \{J\}$. The maximal ideal $J$ appearing in the above statement is called the crucial maximal ideal \cite[Definition~2.9]{lucy}.

The Proposition $2.11$ of $\cite{lucy}$ states that if $R\subset T$ is a minimal ring extension, then the crucial maximal ideal is the only maximal ideal which contains $(R:T)$. In \cite[Corollary~2.14]{lucy}, the author states that if $R\subset T$ is a minimal ring extension and $T$ is an integral domain, then $(R:T)$ = 0 if and only if $R$ is a field and $T$ is a field extension of prime degree over $R$, or $R$ is a valuation ring of altitude one and $T$ is its quotient field. We give an example which shows the above mentioned proposition and corollary are not true.

\section{\emph{Maximal subrings of certain commutative rings}}

The problem of classifying the minimal ring extensions of a von Neumann regular ring was posed by Dobbs in \cite{dobbs4}. In our first theorem, we present a complete classification of minimal ring extensions of a von Neumann regular ring.

\begin{Theorem}\label{t1}
Let $R\subset T$ be a minimal ring extension where $R$ is a von Neumann regular ring. Then either $T$ is a von Neumann regular ring or $T\cong R (+) R/\mathfrak{m}$ (as $R$-algebra) for some maximal ideal $\mathfrak{m}$ of $R$.
\end{Theorem}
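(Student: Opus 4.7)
The plan is to apply Ferrand's dichotomy for minimal ring extensions and then to exploit the fact that every prime localization of a von Neumann regular ring is a field. Let $J\in\text{Max}(R)$ be the crucial maximal ideal of $R\subset T$; then $R_J\cong R/J$ is a field, while $R_P\cong T_P$ for all $P\in\text{Spec}(R)\setminus\{J\}$. I would first rule out the flat epimorphism case. If $R\hookrightarrow T$ were a flat epimorphism, then so would be its localization $R_J\hookrightarrow T_J$. Since every minimal ring extension of a field is a finite-dimensional vector space (the only one-generator extensions of a field with no intermediate subring are algebraic), the epimorphism criterion $T_J\otimes_{R_J}T_J\cong T_J$ forces $\dim_{R_J}T_J=1$, i.e., $R_J=T_J$, contradicting minimality. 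By Ferrand's dichotomy, $R\subset T$ is therefore integral, and one can write $T=R[t]$ for some $t$ integral over $R$.

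Next I would analyze the local structure at $J$. Since $R_J$ is a field, $T_J\cong R_J[X]/(f)$ for some monic $f\in R_J[X]$, and the minimality of $R_J\subset T_J$ restricts $T_J$ to one of the three types from the classification of integral minimal ring extensions of a field, cf.\ \cite[Proposition~2.12]{dobbs3}: $T_J$ is a minimal field extension of $R_J$; $T_J\cong R_J\times R_J$; or $T_J\cong R_J[X]/(X^2)\cong R_J(+)R_J$. In the first two cases, $T_J$ is a finite product of fields, hence reduced; combined with $T_P=R_P$ for every other prime $P$ of $R$, this forces $T$ to be reduced. Because $T$ is integral over the zero-dimensional ring $R$, every prime of $T$ is maximal, so $T$ itself is von Neumann regular.

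In the remaining case $T_J\cong R_J(+)R_J$, the aim is an $R$-algebra isomorphism $T\cong R(+)R/J$. A first step is to note that $T/R$ is a finitely generated $R$-module supported only at $J$, with $(T/R)_J$ one-dimensional over $R_J\cong R/J$; hence $T/R\cong R/J$ as $R$-modules and $T=R+Ru$ for some $u\in T$. One would then adjust $u$ so that $u^2=0$ and $\text{Ann}_R(u)=J$, which produces the desired $R$-algebra isomorphism $(r,\bar s)\mapsto r+su$.

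The main obstacle is this last step: globalizing the local nilpotent structure of $T_J$ to an honest square-zero element $u\in T$ with annihilator exactly $J$ requires a careful lift and verification using both the minimality and the integrality hypothesis. This is essentially the content of the Dobbs-Shapiro classification \cite[Proposition~2.12]{dobbs3} of integral minimal ring extensions, which one could alternatively invoke at the very start to reduce the proof to a case-by-case check on the three possible shapes of $T$.
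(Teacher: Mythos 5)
Your overall strategy is sound and, in the reduced case, genuinely different from the paper's. The paper splits immediately on whether $T$ is reduced: if not, it quotes \cite[Proposition~2.3]{dobbs3} (a non-reduced minimal ring extension of a reduced ring is $R(+)R/\mathfrak{m}$), and if so, it rules out the flat-epimorphism horn of Ferrand's dichotomy by the overring criterion \cite[Proposition~3.9]{glaz} together with $\mathrm{tq}(R)=R$, leaving an integral, hence zero-dimensional, reduced $T$. Your exclusion of the flat-epimorphism case by localizing at the crucial maximal ideal $J$, using that $R_J$ is a field and that an epimorphism from a field onto a nonzero finite-dimensional algebra forces dimension one, is correct and arguably more self-contained (it avoids the overring machinery). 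Your passage from ``$T_P$ reduced for all $P\in\mathrm{Spec}(R)$'' to ``$T$ reduced'' is also fine, since $\mathrm{nil}(T)_P\subseteq\mathrm{nil}(T_P)$ and a module vanishing locally vanishes.

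The genuine gap is the non-reduced case, and you have named it yourself: producing a global $u\in T$ with $u^2=0$ and $\mathrm{Ann}_R(u)=J$ from the local isomorphism $T_J\cong R_J(+)R_J$ is exactly the step you do not carry out. Moreover, the citation you propose as a fallback, \cite[Proposition~2.12]{dobbs3}, does not close it: that result only delivers the existence of $q$ with $T=R[q]$, $q^2,q^3\in R$ and $\mathfrak{m}q\subseteq R$ (case (iii) of Theorem~2.2 above), and one would still have to normalize $q$ to a square-zero element to exhibit the idealization structure. The correct reference is \cite[Proposition~2.3]{dobbs3}, which states directly that a non-reduced minimal ring extension of a reduced ring is $R$-algebra isomorphic to $R(+)R/\mathfrak{m}$; this is what the paper uses, and it replaces your entire third paragraph. (Alternatively, your sketch can be completed by hand: since $\mathrm{nil}(T)\cap R=\mathrm{nil}(R)=0$, the nilradical $N$ of $T$ injects into $T/R\cong R/J$, so $N=Ru\cong R/J$ with $JN=0$; writing $u^2=\lambda u$ and using nilpotence of $u$ gives $\lambda\in J$, hence $u^2=0$ --- but as written your argument stops short of this.)
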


\begin{proof}
Since $R$ is von Neumann regular, $R$ is reduced. First assume that $T$ not reduced. Then by \cite[Proposition~2.3]{dobbs3}, $T\cong R(+) R/\mathfrak{m}$ (as $R$-algebra) for some maximal ideal $\mathfrak{m}$ of $R$. Now, assume that $T$ is a reduced ring. If $R\hookrightarrow T$ is a flat epimorphism, then by \cite[Proposition~3.9]{glaz}, $T$ is an overring of $R$. This is a contradiction as $\text{tq}(R) = R$. Thus, by \cite[Th\'{e}or\`{e}me~2.2]{ferrand}, $T$ is an integral extension of $R$ and hence $\text{dim}(T) = \text{dim}(R) = 0$. Therefore, $T$ is a von Neumann regular ring. 
\end{proof}

The next theorem further characterizes the minimal ring extensions of a von Neumann regular ring. 

\begin{Theorem}\label{t2}
Let $R$ be a von Neumann regular ring. Then $T$ is a minimal ring extension of $R$ if and only if there exists a maximal ideal $\mathfrak{m}$ of $R$ such that one of the following three conditions holds:
\begin{enumerate}

\item[(i)] $\mathfrak{m}$ is a maximal ideal of $T$ and $T/\mathfrak{m}$ is a minimal field extension of $R/\mathfrak{m}$;

\item[(ii)] There exists $q\in T\setminus R$ such that $T = R[q]$, $q^2-q\in \mathfrak{m}$, and $\mathfrak{m}q\subseteq R$;

\item[(iii)] There exists $q\in T\setminus R$ such that $T = R[q]$, $q^2\in R$, $q^3\in R$, and $\mathfrak{m}q\subseteq R$. 

If any of the above three conditions holds, then $\mathfrak{m}$ is uniquely determined as $(R:T)$. Also (i)-(iii) are mutually exclusive.
\end{enumerate}
\end{Theorem}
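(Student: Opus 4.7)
My plan is to leverage Theorem \ref{t1}, which decomposes every minimal extension $R \subset T$ (with $R$ von Neumann regular) into two cases: either $T \cong R(+)R/\mathfrak{m}$ as an $R$-algebra for some $\mathfrak{m} \in \text{Max}(R)$, or $T$ is von Neumann regular and (by the proof of Theorem \ref{t1}) integral over $R$. The idealization case yields (iii) at once: the element $q = (0,\overline{1}) \in T\setminus R$ satisfies $R[q]=T$, $q^2=q^3=0\in R$, and $\mathfrak{m}q=0\subseteq R$.

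The substantive work is when $T$ is von Neumann regular. I would pass to the crucial maximal ideal $J$ furnished by the Ferrand--Olivier result recalled in the introduction: $R_J\hookrightarrow T_J$ is a minimal ring extension and $R_P\cong T_P$ for every prime $P\neq J$. Because $R$ is von Neumann regular, $R_J\cong R/J$ is a field; because $T$ is reduced, $T_J$ is reduced. The minimal extensions of a field $k$ are, up to isomorphism, a minimal field extension $k\subset K$, the product $k\times k$, and the idealization $k(+)k$; reducedness eliminates the last. Setting $\mathfrak{m}:=J$, the first subcase yields (i) after checking that $\mathfrak{m}T=\mathfrak{m}$ (so $\mathfrak{m}$ remains maximal in $T$ and $T/\mathfrak{m}$ is the prescribed minimal field extension of $R/\mathfrak{m}$). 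The second subcase yields (ii) after lifting the nontrivial idempotent of $T_J\cong R_J\times R_J$ to an idempotent $q\in T$ (using the idempotent-lifting available in the von Neumann regular setting) and verifying $q^2-q=0\in\mathfrak{m}$, $T=R[q]$, and $\mathfrak{m}q\subseteq R$ from the local data.

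For the converse, I would verify each of (i)--(iii) produces a minimal extension by a direct intermediate-ring argument: in (i), any $R\subseteq S\subseteq T$ descends to $R/\mathfrak{m}\subseteq S/\mathfrak{m}\subseteq T/\mathfrak{m}$, whose minimality forces $S=R$ or $S=T$; in (ii) and (iii), the relations $T=R+Rq$ with $\mathfrak{m}q\subseteq R$ and $\mathfrak{m}$ maximal constrain any intermediate $S$ to be $R$ or $T$. Uniqueness of $\mathfrak{m}$ comes from $\mathfrak{m}\subseteq(R:T)$ (immediate from the relations defining each case) together with the maximality of $\mathfrak{m}$, identifying $\mathfrak{m}=(R:T)$. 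Mutual exclusivity follows by comparing the ring structures of $T/\mathfrak{m}T$: a nontrivial field extension in (i), a nontrivial idempotent in (ii), and a nonzero nilpotent in (iii) are pairwise incompatible.

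The principal technical obstacle will be case (ii): producing the global element $q\in T$ from the local idempotent in $T_J$ and confirming the sharper condition $\mathfrak{m}q\subseteq R$ (not merely $\mathfrak{m}q\subseteq T$). This requires patching the local data against the fact that $R$ and $T$ agree outside $J$, and it is here that reducedness and von Neumann regularity of both $R$ and $T$ are used essentially.
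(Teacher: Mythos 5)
Your argument is essentially correct, but it takes a genuinely different and much longer route than the paper. The paper's proof is a three-line reduction: since $R$ is von Neumann regular, $\mathrm{tq}(R)=R$, so $R\hookrightarrow T$ cannot be a flat epimorphism (else $T$ would be an overring by \cite[Proposition~3.9]{glaz}); Ferrand's dichotomy then forces $T$ to be integral over $R$, and the entire classification (i)--(iii), including the identification $\mathfrak{m}=(R:T)$ and mutual exclusivity, is quoted from \cite[Proposition~2.12]{dobbs3}, which is precisely the classification of integral minimal ring extensions in this form. You instead re-derive that classification from scratch: Theorem \ref{t1} splits off the idealization case (your verification that $q=(0,\overline{1})$ realizes (iii) is correct), and in the reduced case you localize at the crucial maximal ideal $J$ and invoke the classification of minimal extensions of a field. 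This works, and your converse and exclusivity arguments are sound, but two remarks are in order. First, the step you flag as the principal obstacle is easier than you fear: condition (ii) only asks for $q^2-q\in\mathfrak{m}$, so you need not lift to a genuine idempotent of $T$ --- any preimage in $T$ of the nontrivial idempotent of $T/\mathfrak{m}$ suffices. Second, the real technical content is showing $JT\subseteq R$ (so that $J$ is a common ideal and one can pass to $R/J\subset T/J$ rather than juggling localizations); this follows by checking the finitely generated $R$-module $(R+RJt)/R$ vanishes locally everywhere, using $R_P=T_P$ for $P\neq J$ and $JR_J=0$ from von Neumann regularity. Your route buys a self-contained proof independent of \cite[Proposition~2.12]{dobbs3}; the paper's buys brevity at the cost of outsourcing all the content to the citation.
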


\begin{proof}
Since $R$ is a von Neumann regular, we have $\text{tq}(R) = R$. If $R\hookrightarrow T$ is a flat epimorphism, then $T$ is an overring of $R$, by \cite[Proposition~3.9]{glaz}, which is not possible. Thus, by \cite[Th\'{e}or\`{e}me~2.2]{ferrand}, any minimal ring extension of $R$ is an integral extension of $R$. Now, the result follows by \cite[Proposition~2.12]{dobbs3}.
\end{proof}

In \cite[Theorem~2.4]{dobbs3}, a characterization of minimal ring extension of a reduced ring $R$ such that the total quotient ring of $R$, is a von Neumann regular ring, is given. However, till now we do not know any minimal ring extension of a non-reduced ring $R$ other than $R(+)R/\mathfrak{m}$, where $\mathfrak{m}$ is a maximal ideal of $R$. In the next theorem, we have shown that $R\times R$ is a minimal ring extension of its subring which may not be reduced.

\begin{Theorem}\label{t4}
For any ring $R$, let $\Delta: R \hookrightarrow R\times R$ be the diagonal map, given by $\Delta(r) = \left(r,r\right)$ for all $r\in R$. Then for any $a, b\in R$, $\Delta(R)[(a,b)] \subset R\times R$ is a minimal ring extension if and only if the ideal $<a-b>$ is a maximal ideal of $R$.
\end{Theorem}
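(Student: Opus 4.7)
Let me write $c = a - b$ and $I = \langle c \rangle$. The main observation I would establish first is that the subrings of $R \times R$ containing $\Delta(R)$ are parametrized by the ideals of $R$ in an order-preserving way. Specifically, for any ideal $I$ of $R$, define
\[
T_I := \{(r,s) \in R \times R : s - r \in I\}.
\]
A routine check (using $s_1 s_2 - r_1 r_2 = s_1(s_2 - r_2) + r_2(s_1 - r_1)$ for the multiplicative closure) shows $T_I$ is a subring of $R \times R$ containing $\Delta(R)$. Conversely, given any subring $T$ with $\Delta(R) \subseteq T \subseteq R \times R$, the set $I_T := \{x \in R : (0,x) \in T\}$ is an ideal of $R$ (it is closed under $\Delta(R)$-multiplication since $(r,r)(0,x) = (0,rx)$), and for any $(r,s) \in T$ we have $(0, s-r) = (r,s) - (r,r) \in T$, so $s - r \in I_T$; conversely $(r,s) = (r,r) + (0, s-r) \in T$ whenever $s - r \in I_T$. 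Hence $T = T_{I_T}$, and the map $I \mapsto T_I$ is an order-preserving bijection.

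Next I would identify $\Delta(R)[(a,b)]$ with $T_I$. Since $(a,b) = (a,a) + (0, b-a)$, the element $(0, b-a) = (0, -c)$ lies in $\Delta(R)[(a,b)]$, and a typical element has the form
\[
(r_0, r_0) + \sum_{i \geq 1} (r_i, r_i)(0, -c)^i = \Bigl(r_0,\, r_0 + \sum_{i \geq 1} r_i (-c)^i\Bigr),
\]
so the second coordinate differs from the first by an element of $I = \langle c \rangle$, and every such element arises. Thus $\Delta(R)[(a,b)] = T_I$.

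Having set up the bijection, the theorem reduces to pure ideal theory. An intermediate ring $\Delta(R)[(a,b)] \subseteq T \subseteq R \times R$ corresponds to an ideal $J$ with $I \subseteq J \subseteq R$. The extension is proper precisely when $I \subsetneq R$, i.e., $c$ is a non-unit (consistent with the cited result \cite[Lemma 2.1]{dobbs1} of Dobbs). Minimality then says there is no ideal strictly between $I$ and $R$, which is exactly the statement that $I = \langle a - b \rangle$ is a maximal ideal. If $\langle a-b \rangle = R$ the extension equals $R \times R$ (not proper, hence not minimal), and if $\langle a-b \rangle$ is a proper non-maximal ideal then any ideal $J$ with $\langle a-b \rangle \subsetneq J \subsetneq R$ produces a proper intermediate subring $T_J$, obstructing minimality.

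The only real work is verifying the bijection between ideals of $R$ and intermediate subrings; once that is in hand, the rest is immediate. I do not anticipate any obstacle — the closure computations for $T_I$ and the reconstruction $T = T_{I_T}$ are both straightforward.
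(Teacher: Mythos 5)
Your proposal is correct, and the second half of your argument takes a genuinely different route from the paper. Both proofs begin with the same core identification, namely that $\Delta(R)[(a,b)]$ consists exactly of the pairs whose coordinates differ by an element of $\langle a-b\rangle$ (the paper's equation (1) is your $T_{\langle a-b\rangle}$). From there the paper argues element-wise in both directions: for sufficiency it picks $(e,f)$ outside the subring, writes $1=(a-b)t_1+(e-f)t_2$ from comaximality, and manufactures $(1,0)$ and $(0,1)$ by hand; for necessity it adjoins $(e,0)$ for $e\in I\setminus\langle a-b\rangle$ and extracts $1\in I$ from an explicit polynomial expression for $(1,0)$. You instead prove the stronger structural statement that $I\mapsto T_I=\{(r,s): s-r\in I\}$ is an order isomorphism from the ideal lattice of $R$ onto the lattice of subrings of $R\times R$ containing $\Delta(R)$ (with inverse $T\mapsto\{x:(0,x)\in T\}$), after which minimality becomes literally the maximality of $\langle a-b\rangle$. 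This is the exact analogue, for the direct product, of Dobbs's Remark~2.9 on idealizations that the paper itself invokes to prove Proposition~\ref{t5}, and it buys more than the theorem asks for: a classification of all intermediate rings between $\Delta(R)$ and $R\times R$, from which the paper's Theorem~\ref{t4}, the non-minimal cases, and Dobbs's Lemma~2.1 (the unit criterion for $\Delta(R)[(a,b)]=R\times R$) all fall out at once. The paper's computational route is marginally shorter if one only wants the stated equivalence, but your correspondence is the more reusable lemma; your verification steps (the identity $s_1s_2-r_1r_2=s_1(s_2-r_2)+r_2(s_1-r_1)$, the reconstruction $T=T_{I_T}$, and the reduction of $(a,b)$ to $(0,b-a)$ modulo $\Delta(R)$) are all sound.
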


\begin{proof}
First, we claim that \begin{align}\label{eq1}
\Delta(R)[(a,b)] &=\{(c,d)\in R\times R~|~ c-d\in <a-b>\}.
\end{align}

Let $(c,d)\in R\times R$ such that $c-d\in <a-b>$. Then $c-d = (a-b)t$ for some $t\in R$. As
\begin{align*}
(c,d) &=  (c-ta,c-ta) + (t,t)(a,b),
\end{align*} 
\noindent
we conclude that $(c,d)\in \Delta(R)[(a,b)]$.
Now, assume that $(e,f)\in \Delta(R)[(a,b)]$. So,
$$(e,f) = (a_0,a_0) + (a_1,a_1)(a,b) + (a_2,a_2)(a,b)^2 + \cdots + (a_n,a_n)(a,b)^n,$$ where $(a_i, a_i) \in \Delta(R)$ for all $i$. 
This gives,
\begin{align}
e &= a_0 + a_1 a + a_2 a^2 + \cdots + a_n a^n, \label{3}\\
f &= a_0 + a_1 b + a_2 b^2 + \cdots + a_n b^n. \label{4}
\end{align}
On subtracting $(\ref{4})$ from $(\ref{3})$, we have

$$e-f = a_1 (a-b) + a_2(a^2-b^2) + \cdots + a_n(a^n-b^n).$$

This gives $e-f\in <a-b>$. So, the claim holds. Now, suppose that $<a-b>$ is a maximal ideal of $R$. We assert that $\Delta(R)[(a,b)] \subset R\times R$. If possible, suppose $\Delta(R)[(a,b)] = R\times R$. Then $(1,0) \in \Delta(R)[(a,b)]$. Therefore, by $(\ref{eq1})$, we have $1\in<a-b>$, which is a contradiction. Therefore, $\Delta(R)[(a,b)] \neq R\times R$. Now, to show that $\Delta(R)[(a,b)] \subset R\times R$ is a minimal ring extension, enough to show that $(\Delta(R)[(a,b)]) [(e,f)] = R\times R$ for any $(e,f) \in (R\times R)\setminus\Delta(R)[(a,b)]$.

Note that $e-f \notin <a-b>$, by $(\ref{eq1})$. Therefore, $<a-b> + <e-f> = R$ and hence  

$$1 = (a-b)t_1 + (e-f)t_2 ~ \mbox{for some}~ t_1, t_2\in R.$$

This gives,  

$$(1,0) = ((a-b) t_1,0) + ((e-f) t_2,0).$$

Now, by $(\ref{eq1})$, we have $$((a-b) t_1, 0) \in  \Delta(R)[(a,b)] \subseteq (\Delta(R)[(a,b)]) [(e,f)]$$ and $$((e-f) t_2, 0) \in  \Delta(R)[(e,f)] \subseteq (\Delta(R)[(a,b)]) [(e,f)].$$ Thus, $(1,0) \in (\Delta(R)[(a,b)])[(e,f)]$. Similarly, $(0,1) \in (\Delta(R)[(a,b)])[(e,f)]$ and hence the claim holds. 

Conversely, suppose that $\Delta(R)[(a,b)] \subset R\times R$ is a minimal ring extension. First we assert that $<a-b>$ is a proper ideal of $R$. If possible, suppose that $1\in <a-b>$. Then $(1,0), (0,1)\in \Delta(R)[(a,b)]$ by $(\ref{eq1})$. It follows that $\Delta(R)[(a,b)] = R\times R$, a contradiction. Thus, $<a-b>$ is a proper ideal of $R$. Now, let $I$ be any ideal of $R$ properly containing the ideal $<a-b>$. Choose $e\in I\setminus<a-b>$. Then by (\ref{eq1}), $\left(e,0\right) \notin \Delta(R)[\left(a,b\right)]$. By minimality, we conclude that $\left(\Delta(R)[\left(a,b\right)]\right) [\left(e,0\right)] = R\times R$. Thus,
$$(1,0) = (a_0,b_0) + (a_1,b_1)(e,0) + (a_2,b_2)(e,0)^2 + \cdots + (a_n,b_n)(e,0)^n,$$ where  $(a_i,b_i) \in \Delta(R)[(a,b)]$ for all $i$. 

This gives, $$1 = a_0 + a_{1}e + \cdots + a_{n}e^n~\mbox{and}~ b_0 = 0.$$ Now, by $(\ref{eq1})$, $a_0-b_0 \in <a-b> \subset I$. As $a_{i}e\in I$ for all $i$, we must have $1\in I$. Therefore, $<a-b>$ is a maximal ideal of $R$.
\end{proof} 

\begin{Remark}
Note that \cite[Proposition~2.2]{dobbs1} is a particular case of Theorem \ref{t4} with $a = b$. 
\end{Remark}
 
Note that a maximal subring of a ring $R$ may not exists. For example, the ring of integers $\mathbb{Z}$ does not admit any maximal subring. However, $R(+)R$ always admits a maximal subring as we have in the next result. In fact, in the next proposition, we present a complete classification of maximal subrings of $R(+)R$.

\begin{Proposition}\label{t5}
For any ring $R$, let $R \hookrightarrow R(+) R$ be the canonical injective ring homomorphism, given by $r \mapsto \left(r,0\right)$ for all $r\in R$. Then for any $a, b\in R$, $R[(a,b)] \subset R(+) R$ is a minimal ring extension if and only if the ideal $<b>$ is a maximal ideal of $R$.
\end{Proposition}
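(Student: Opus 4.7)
The plan is to first find a clean description of $R[(a,b)]$ as a subring of $R(+)R$. Since $(a,0)$ already sits in the embedded copy of $R$, we have $R[(a,b)] = R[(0,b)]$, so $a$ plays no role at all. In the idealization the element $(0,b)$ is nilpotent of square zero, so $R[(0,b)]$ collapses to $\{(r,sb) \mid r,s \in R\} = R(+)Rb$. The proposition therefore reduces to showing that $R(+)Rb \subset R(+)R$ is minimal if and only if $\langle b \rangle$ is maximal in $R$. This is very much in the spirit of Theorem \ref{t4}, and I would model the two implications on that proof.

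For the forward implication, I would assume $Rb$ is maximal and take an arbitrary $(e,f) \in (R(+)R) \setminus (R(+)Rb)$. Then $f \notin Rb$, so by maximality $1 = bt_1 + ft_2$ for some $t_1, t_2 \in R$. A short computation using the idealization multiplication gives $(0,1) = (0, bt_1) + (t_2, 0)(e,f) - (t_2 e, 0)$, with the three summands lying in $R(+)Rb$, in the adjoined ring $(R(+)Rb)[(e,f)]$, and in the embedded copy of $R$ respectively. Hence $(0,1) \in (R(+)Rb)[(e,f)]$, and multiplying by arbitrary $(r,0)$ captures all of $0(+)R$; together with the embedded $R(+)0$ this forces $(R(+)Rb)[(e,f)] = R(+)R$, giving minimality.

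For the converse, I would fix an ideal $I \supsetneq Rb$ and select $e \in I \setminus Rb$, so that $(0,e) \notin R(+)Rb$. Minimality yields $(R(+)Rb)[(0,e)] = R(+)R$, so $(0,1)$ admits a polynomial expansion in $(0,e)$ with coefficients in $R(+)Rb$. Since $(0,e)^2 = (0,0)$ in the idealization, the expansion collapses to $(0,1) = (r_0, bs_0) + (0, r_1 e)$, whence $r_0 = 0$ and $1 = bs_0 + r_1 e \in Rb + Re \subseteq I$. Thus $I = R$ and $\langle b \rangle$ is maximal. The real conceptual hurdle is the opening reduction: recognizing that $a$ is irrelevant and that $(0,b)$ squares to zero, so that $R[(a,b)]$ equals the idealization $R(+)Rb$. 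Once that observation is in hand, both directions become essentially ideal-theoretic bookkeeping parallel to the proof of Theorem \ref{t4}, with the role of $(a-b)$ there played here by the element $b$.
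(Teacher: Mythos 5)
Your proof is correct, and for the minimality direction it takes a genuinely more self-contained route than the printed proof. The paper disposes of the whole proposition by citing two facts from Dobbs: Lemma~2.3 of \cite{dobbs1} for the identification $R[(a,b)] = R(+)\!\left<b\right>$, and Remark~2.9 of \cite{dobbs1}, which says that \emph{every} subring of $R(+)R$ containing the embedded copy of $R$ has the form $R(+)I$ for an ideal $I$; with that classification in hand, minimality of $R(+)Rb\subset R(+)R$ becomes pure ideal-lattice bookkeeping in both directions. You instead prove the identification from scratch (correctly, via $R[(a,b)]=R[(0,b)]$ and $(0,b)^2=0$) and then run an explicit generation argument modeled on Theorem~\ref{t4}: the identity $(0,1)=(0,bt_1)+(t_2,0)(e,f)-(t_2e,0)$ and the collapse of the polynomial expansion of $(0,1)$ in $(0,e)$ both check out. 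What your approach buys is independence from the classification of intermediate rings, at the cost of a longer computation; what the paper's approach buys is brevity and the stronger structural picture that the subrings between $R$ and $R(+)R$ are exactly the $R(+)I$. One small point to tidy in your converse: after showing that every ideal $I$ properly containing $Rb$ equals $R$, you should also note that $Rb$ itself is proper, which is immediate since a minimal ring extension is by definition a proper containment, so $R(+)Rb\neq R(+)R$ forces $1\notin Rb$.
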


\begin{proof}
Note that $R[(a,b)] = R(+)<b>$, by \cite[Lemma~2.3]{dobbs1}. First suppose that $R[(a,b)] \subset R(+) R$ is a minimal ring extension. Thus, $<b>$ is a proper ideal of $R$. Let $I$ be any ideal of $R$ properly containing $<b>$. Then we have $R(+)<b> \subset R(+)I$. It follows that $R(+)I = R(+)R$ and so $I = R$. Therefore, $<b>$ is a maximal ideal of $R$.

Conversely, assume that $<b>$ is a maximal ideal of $R$. Thus, $R[(a,b)] \subset R(+) R$ as $R[(a,b)] = R(+)<b>$. Let $T$ be a subring of $R(+) R$ containing $R[(a,b)]$ properly. Then by \cite[Remark~2.9]{dobbs1}, $T = R(+)I$ for some ideal $I$ of $R$. It follows that $<b>\subset I$ and so $I = R$. Therefore, $R[(a,b)] \subset R(+) R$ is a minimal ring extension.
\end{proof}

The following corollaries can be deduced immediately from the above proposition.

\begin{Corollary}\label{r1}
Let $R$ be any ring and $M$ be a maximal ideal of $R$. Then $R(+)M$ is a maximal subring of $R(+)R$. In particular, $R(+)R$ has maximal subrings for any ring $R$.  
\end{Corollary}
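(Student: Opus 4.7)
The plan is to deduce the corollary by paralleling the converse direction in the proof of Proposition \ref{t5}, with principal ideals replaced by arbitrary ideals. First I would verify that $R(+)M$ is a subring of $R(+)R$: closure under the idealization multiplication follows from $RM \subseteq M$, and $R(+)M$ contains the image $\{(r,0) : r\in R\}$ of the canonical embedding. Since $M$ is a maximal (hence proper) ideal of $R$, the inclusion $R(+)M \subsetneq R(+)R$ is strict.

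Next, to show this inclusion is minimal, I would let $T$ be any subring satisfying $R(+)M \subsetneq T \subseteq R(+)R$ and invoke \cite[Remark~2.9]{dobbs1}, which tells us $T = R(+)I$ for some ideal $I$ of $R$. The strict containment forces $M \subsetneq I$, and by maximality of $M$ this yields $I = R$, so $T = R(+)R$. Thus no proper intermediate subring exists, establishing that $R(+)M$ is a maximal subring.

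For the \emph{in particular} clause, I would appeal to Zorn's lemma: since the paper's standing convention is that every ring has a nonzero identity, the poset of proper ideals is nonempty and chain-complete, so a maximal ideal $M$ exists. The preceding argument applied to this $M$ produces a maximal subring $R(+)M$ of $R(+)R$. No obstacle is anticipated; the entire argument is a direct transcription of the converse in Proposition \ref{t5}, the one substantive input being the classification of subrings of $R(+)R$ containing $R$ from \cite[Remark~2.9]{dobbs1}, which has already been used in the paper.
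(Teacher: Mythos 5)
Your proof is correct and follows essentially the same route the paper intends: the paper simply asserts the corollary is immediate from Proposition \ref{t5}, and your argument is the converse direction of that proposition's proof with the principal ideal $Rb$ replaced by an arbitrary maximal ideal $M$, resting on the same key input (\cite[Remark~2.9]{dobbs1}). You are right to redo that step explicitly, since the proposition as stated only covers the principal case, but this is the intended deduction rather than a genuinely different approach.
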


\begin{Corollary}\label{r2}
Let $R$ be a ring. Then $R$ is a maximal subring (upto isomorphism) of $R(+)R$ if and only if $R$ is a field.
\end{Corollary}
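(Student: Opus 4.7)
The plan is to derive Corollary \ref{r2} as an immediate consequence of Proposition \ref{t5} by specialising to the case $b=0$. Under the canonical embedding $r\mapsto(r,0)$, the image of $R$ inside $R(+)R$ is precisely $R(+)0=R(+)\langle 0\rangle$, which (as noted in the proof of Proposition \ref{t5}, via \cite[Lemma~2.3]{dobbs1}) coincides with $R[(a,0)]$ for any choice of $a\in R$. Hence the assertion that $R$, viewed up to isomorphism via this canonical identification, is a maximal subring of $R(+)R$ is precisely the assertion that $R[(a,0)]\subset R(+)R$ is a minimal ring extension.

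Applying Proposition \ref{t5} to the pair $(a,0)$, this minimality is equivalent to the principal ideal $\langle 0\rangle$ being a maximal ideal of $R$; but the zero ideal is maximal if and only if $R$ is a field, which yields both implications simultaneously without needing to split into separate forward and backward arguments. The only subtlety worth checking is that Proposition \ref{t5} genuinely covers the degenerate case $b=0$, but inspecting its proof shows nothing was used about $b$ being nonzero: the containment $R(+)0\subsetneq R(+)R$ is still proper because $R$ has a nonzero identity, and the appeal to \cite[Remark~2.9]{dobbs1} classifying intermediate subrings as $R(+)I$ applies verbatim. I anticipate no real obstacle beyond this bookkeeping observation; the content of the corollary is exactly the recognition that it is this specialisation.
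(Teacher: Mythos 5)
Your proposal is correct and is exactly the deduction the paper intends: specialise Proposition \ref{t5} to $b=0$, identify the canonical copy of $R$ with $R[(a,0)]=R(+)\langle 0\rangle$, and use that $\langle 0\rangle$ is maximal precisely when $R$ is a field. The paper gives no separate argument (it calls the corollary immediate), so your route coincides with its proof.
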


We end this section with the following remark. 

\begin{Remark} In \cite[Corollary~2.8]{azarang1}\label{t6}, Azarang proved that every finitely generated algebra over a commutative ring has a maximal subring. The result does not seem to be correct as there are rings with no maximal subring. For example, the ring of integers $\mathbb{Z}$ does not admit any maximal subring. Clearly, any such ring is a finitely generated algebra over itself. 
\end{Remark}

\section{\bf Correction to some known results}

We assume throughout that $J$ denote the crucial maximal ideal of minimal ring extension $R\subset T$ unless otherwise stated. For completeness, we first list the results which we are going to discuss in this section. 

\begin{enumerate}

\item[(1)] \cite[Proposition~2.11]{lucy} Let $R\subset T$ be a minimal ring extension. Then $(R:T)\in \text{Spec}(R)$ and $J$ is the only maximal ideal in $R$ which contains $(R:T)$. Moreover, if no maximal ideal in $T$ lies over $J$, then the following statement holds: $(R:T)\subset J$, $T_{J} = R_{(R:T)}$ is local, $(R_J:T_{J}) = (R:T)R_J$ is the maximal ideal in $T_{J}$, $\text{height}(J/(R:T)) = 1$, and $(R:T)T\in \text{Max}(T)$.

\item[(2)] \cite[Corollary~2.14]{lucy} If $R\subset T$ is a minimal ring extension and $T$ is an integral domain, then $(R:T) = 0$ if and only if $R$ is a field and $T$ is a field extension of prime degree over $R$, or $R$ is a valuation ring of altitude one and $T$ is its quotient field.

\item[(3)] \cite[Proposition~3.2(3)]{glaz} Let $f:R\hookrightarrow T$ be a minimal ring homomorphism. If $f:R\hookrightarrow T$ is a flat epimorphism, then $R/(R:T)$ is a one-dimensional local domain, $(R:T)\in \text{Max}(T)$ and $T_J = R_{(R:T)}$.

\item[(4)] \cite[Proposition~3.5]{glaz} Let $R\hookrightarrow T$ be an injective ring homomorphism. Then $R\hookrightarrow T$ is minimal and a flat epimorphism if and only if $R/(R:T)$ is a one-dimensional valuation ring and $T/(R:T)$ is its quotient field.
\end{enumerate}

We now present a counter example to show that $(1)$ is not fully correct. More precisely, we show that $J$ may not be the only maximal ideal containing $(R:T)$ and $(R:T)T$ may not belongs to $\text{Max}(T)$. In fact, there may be infinitely many maximal ideals containing $(R:T)$. The example also proves that $(2)$ is completely incorrect. On page $310$ of $\cite{glaz1}$, the authors mentioned that the assumption of $R$ to be local in above results $(3)$ and $(4)$ is missing due to printing mistake. Our next example shows that why this extra assumption is needed in above results $(3)$ and $(4)$. Moreover, we prove the modified version of $(3)$ and $(4)$ (where we do not need $R$ to be local) in Proposition $\ref{p1}$ and Theorem $\ref{t9}$, respectively.

\begin{Example}\label{e1}
Let $R = \mathbb{Z}$, $T = \mathbb{Z}[1/2]$. We assert that $R\subset T$ is a minimal ring extension. Suppose there is a ring $S$ such that $R\subset S\subseteq T$. Choose $f(1/2) = \sum_{i=0}^{n} \alpha_{i}(1/2)^i\in S\setminus R$. Then $f(1/2) = m/2^k$ for some $k \in \mathbb{N}$ and $m\in R$. Thus, $m/2 = 2^{k-1} (m/2^k) \in S$, which gives 1/2 $\in$ $S$. Therefore, $T$ is a minimal ring extension of $R$. Note that $(R:T) = 0$, as for every $\alpha \in  R$, there exists $n \in \mathbb{N}$ such that $\alpha/2^n$ is not an integer. Now crucial maximal ideal $J$ of the extension $R\subset T$ is $2\mathbb{Z}$ as $R_J\hookrightarrow T_J$ is not an isomorphism and $R_P\hookrightarrow T_P$ is an isomorphism for all $P\in \text{Spec}(R)\setminus \{J\}$. This counters $(1)$ as every maximal ideal of $R$ contains $(R:T)$. Also $0 = (R:T)T \not\in \text{Max}(T)$. As $R$ is not a field and neither $R$ is a valuation ring nor $T$ is its quotient field, this counters $(2)$ completely. Now, observe that $R$ is integrally closed in $T$. So, Ferrand's dichotomy \cite[Th\'{e}or\`{e}me~2.2]{ferrand} gives that the inclusion map $f:R \hookrightarrow T$ is a flat epimorphism. This shows that the assumption of $R$ to be local is needed in $(3)$ and $(4)$.
\end{Example}

Though the above example shows that there may be infinitely many maximal ideals in $R$ containing $(R:T)$ and $(R:T)T$ may not belong to $\text{Max}(T)$, however, the remaining statement of \cite[Proposition~2.11]{lucy} is correct, which is as follows:

\begin{Theorem}\label{t7}\cite[Proposition~2.11]{lucy}
Let $R\subset T$ be a minimal ring extension and $J$ be the crucial maximal ideal. Then $(R:T) \in \text{Spec}(R)$. Moreover, if no maximal ideal in $T$ lies over $J$, then $(R:T)\subset J$, $T_{J} = R_{(R:T)}$ is local, $(R_J:T_{J}) = (R:T)R_J$ is the maximal ideal in $T_{J}$, and $\text{height}(J/(R:T)) = 1$.
\end{Theorem}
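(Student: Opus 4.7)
The plan is to prove the four assertions in stages: first that $(R:T)$ is a prime ideal under the minimality hypothesis alone; then that the hypothesis on maximal ideals forces the strict inclusion $(R:T)\subsetneq J$; and finally that after localizing at $J$ the remaining claims reduce to the corrected structure theorem for minimal flat epimorphisms out of local rings, namely Proposition~\ref{p1} in the next section.

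For primality of $(R:T)$, I would suppose $ab\in(R:T)$ with $a\notin(R:T)$ and observe that $aT\not\subseteq R$, so $R+aT$ is an $R$-subalgebra of $T$ strictly containing $R$; by minimality $R+aT=T$. Multiplying by $b$ gives $bT=bR+abT\subseteq R+(R:T)T\subseteq R$, so $b\in(R:T)$, proving $(R:T)\in\mathrm{Spec}(R)$.

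Next I would reformulate the extra hypothesis as $JT=T$: if $JT\subsetneq T$, any maximal ideal of $T$ containing $JT$ would contract to a prime of $R$ containing $J$, which equals $J$ by maximality, producing a maximal ideal of $T$ lying over $J$. To prove $(R:T)\subseteq J$, suppose otherwise; then $(R:T)+J=R$, so $1=c+j$ for some $c\in(R:T)$ and $j\in J$. Since $c\notin J$, the image $c/1$ is a unit in $R_J$, and the inclusion $cT\subseteq R$ localizes to $(c/1)T_J\subseteq R_J$, forcing $T_J\subseteq R_J$; this contradicts $R_J\subsetneq T_J$ at the crucial maximal ideal. If instead $(R:T)=J$, then $T=JT=(R:T)T\subseteq R$, again impossible; hence $(R:T)\subsetneq J$.

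For the final conclusions I would localize at $J$. The inclusion $R_J\hookrightarrow T_J$ is a minimal extension of a local ring satisfying $JR_J\cdot T_J=T_J$, so it cannot be integral (lying-over would produce a maximal ideal of $T_J$ over $JR_J$); Ferrand's dichotomy then forces it to be a flat epimorphism. Applying Proposition~\ref{p1} yields that $T_J=(R_J)_{(R_J:T_J)}$ is local with maximal ideal $(R_J:T_J)$ and that $R_J/(R_J:T_J)$ is a one-dimensional local domain. The identification $(R_J:T_J)=(R:T)R_J$ is the compatibility of the conductor with this localization: $\supseteq$ is immediate, and $\subseteq$ follows by writing $T=R[q]$ and chasing how a representative $r/s\in(R_J:T_J)$ multiplies powers of $q$. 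Combining these gives $T_J=R_{(R:T)}$, and the one-dimensionality of $R_J/(R:T)R_J$ immediately yields $\mathrm{height}(J/(R:T))=1$. The main obstacle I anticipate is executing the step $(R_J:T_J)=(R:T)R_J$ cleanly without circular dependence on Proposition~\ref{p1}; the single-generator description $T=R[q]$ is what makes this tractable.
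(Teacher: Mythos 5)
The paper itself offers no proof of Theorem \ref{t7}: it is presented as the salvageable part of \cite[Proposition~2.11]{lucy} and simply cited, so your attempt supplies an argument where the text supplies none. The first three stages of your proof are correct and complete: the $R+aT=T$ trick for primality of the conductor, the reformulation of the hypothesis as $JT=T$, the unit argument ruling out $(R:T)\not\subseteq J$ together with the observation that $(R:T)=J$ would force $T=JT=(R:T)T\subseteq R$, and the lying-over argument showing $R_J\hookrightarrow T_J$ cannot be integral, hence is a flat epimorphism by Ferrand's dichotomy.

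The final stage, which carries the real content of the theorem, has two problems. First, citing Proposition \ref{p1} is circular within this paper: the proof of Proposition \ref{p1} in Section 3 invokes Theorem \ref{t7} to obtain exactly the conclusions you want to import ($T_J=R_{(R:T)}$, the conductor identity, and the height statement). You would have to replace that citation by the independently proved local-ring structure theorem (Ferrand--Olivier, or the corrected local version of \cite[Proposition~3.2(3)]{glaz}). Second, and more seriously, your sketch of $(R_J:T_J)\subseteq(R:T)R_J$ by ``chasing powers of $q$'' does not close: in the flat-epimorphism case $T$ is never a finitely generated $R$-module (e.g.\ $\mathbb{Z}_{(2)}\subset\mathbb{Q}$), so the denominators $u_n\in R\setminus J$ you extract from $rq^n/1\in R_J$ vary with $n$ and cannot be combined into one element clearing all powers of $q$ simultaneously. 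A repair goes element by element rather than generator by generator: if $p/1\in(R_J:T_J)$ and $t\in T$, then $upt\in R$ for some $u\in R\setminus J$, so the cyclic module $N=(Rpt+R)/R\subseteq T/R$ satisfies $N_J=0$ and also $N_{P}=0$ for every prime $P\neq J$ because $(T/R)_{P}=0$ by \cite[Lemme~1.3]{ferrand}; hence $N=0$, $pt\in R$ for every $t$, and $p\in(R:T)$. With that substitution the conductor identity, and with it $T_J=R_{(R:T)}$ and $\mathrm{height}(J/(R:T))=1$, follow as you indicate.
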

 
We give one more example to counter $(2)$. More precisely, the next example shows that if $R\subset T$ is a minimal ring extension and $T$ is an integral domain with $(R:T) = 0$, then degree of $T$ over $R$ may not be prime.

\begin{Example}\label{e2}
Let $n\geq 4$. Then there exist field extension $K$ of $\mathbb Q$ such that $Gal_{\mathbb Q}(K) = S_n$. In fact, choose $f(X)\in \mathbb Q[X]$ irreducible of degree $4$ such that $\left|Gal_{\mathbb Q}(K)\right| = 24$. Let $\alpha$ be a root of $f(X)$. Then $dim_{\mathbb Q} {\mathbb Q(\alpha)} = 4$ and $\mathbb Q\subset \mathbb Q(\alpha)$ does not have any intermediate ring.
\end{Example}

We now present the correct and modified version of the countered result $(2)$. Note that if $R$ is local, then $R_J = R$. Thus, our next theorem shows that $(2)$ is correct only if $R$ is local.

\begin{Theorem}\label{t8}
If $R\subset T$ is a minimal ring extension and $T$ is an integral domain, then $(R:T) = 0$ if and only if $R$ is a field and $T$ is a minimal field extension of $R$, or $R_J$ is a valuation ring of altitude one and $T_{J}$ is its quotient field.
\end{Theorem}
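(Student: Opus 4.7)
The plan is to split the forward direction along Ferrand's dichotomy. Assuming $(R:T)=0$, $R$ is a domain (as a subring of the domain $T$), and by \cite[Th\'{e}or\`{e}me~2.2]{ferrand}, either $R\hookrightarrow T$ is integral or it is a flat epimorphism.

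In the integral case I would invoke the Dobbs--Shapiro classification \cite[Proposition~2.12]{dobbs3} of integral minimal ring extensions. Of the three types available there, the ``decomposed'' type ($T\cong R\times R/\mathfrak{m}$) and the ``idealization'' type ($T\cong R(+)R/\mathfrak{m}$) force zero-divisors in $T$, contradicting $T$ being a domain. Thus only the ``inert'' type survives: there is a maximal ideal $\mathfrak{m}$ of both $R$ and $T$ such that $T/\mathfrak{m}$ is a minimal field extension of $R/\mathfrak{m}$. Since $\mathfrak{m}$ is an ideal of $T$, $\mathfrak{m}T=\mathfrak{m}\subseteq R$, so $\mathfrak{m}\subseteq(R:T)$; Theorem~\ref{t7} says $(R:T)$ is a proper prime of $R$ containing the maximal $\mathfrak{m}$, hence $(R:T)=\mathfrak{m}$. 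Then $(R:T)=0$ forces $\mathfrak{m}=0$, making $R$ a field and $T$ a minimal field extension of $R$.

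In the flat epimorphism case, no maximal ideal of $T$ lies over $J$, so Theorem~\ref{t7} applies: $T_J=R_{(R:T)}$, $(R_J:T_J)=(R:T)R_J$, and $\text{height}(J/(R:T))=1$. Plugging in $(R:T)=0$, we get $T_J=\text{tq}(R)$ and $\text{height}(J)=1$, so $R_J$ is a one-dimensional local domain whose quotient field equals $T_J$, with $R_J\subset T_J$ a minimal flat epimorphism. The correctly-stated (local) form of \cite[Proposition~3.5]{glaz}, whose missing local hypothesis is acknowledged on p.~310 of \cite{glaz1}, now gives that $R_J/(R_J:T_J)$ is a one-dimensional valuation ring with quotient field $T_J/(R_J:T_J)$; since $(R_J:T_J)=0$, this reads as ``$R_J$ is a valuation ring of altitude one and $T_J$ is its quotient field.''

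For the converse, if $R$ is a field and $T$ is a minimal field extension, then $(R:T)$ is an ideal of a field and is not all of $R$ (else $T\subseteq R$), so $(R:T)=0$. If instead $R_J$ is a valuation ring of altitude one with quotient field $T_J$, then a nonzero $r\in(R:T)$ would give $rT_J\subseteq R_J$; since $r$ is a unit in $T_J$, this yields $T_J\subseteq R_J$, contradicting $R_J$ being altitude one and hence not a field. The main technical obstacle is the identification $(R:T)=\mathfrak{m}$ in the integral case, which relies on the domain hypothesis on $T$ eliminating the non-inert types and on the primality of $(R:T)$ from Theorem~\ref{t7}.
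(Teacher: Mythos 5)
Your overall architecture is sound and genuinely different from the paper's: you split the forward direction along Ferrand's integral/flat-epimorphism dichotomy and settle the flat branch via Theorem \ref{t7} together with the corrected (local) form of \cite[Proposition~3.5]{glaz}, whereas the paper splits on $J=(0)$ versus $J\neq(0)$ and uses Dech\'{e}ne's Theorem~2.13 plus Bourbaki's characterization of maximal proper subrings of a field. The flat-epimorphism branch and your converse (which is more self-contained than the paper's) are correct as written.

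There is, however, a genuine error in the integral branch. The claim that the decomposed and ramified types of \cite[Proposition~2.12]{dobbs3} ``force zero-divisors in $T$'' is false, and it also misreads that proposition: the non-inert types are described there by a generator $q$ with $q^2-q\in\mathfrak{m}$ and $\mathfrak{m}q\subseteq R$ (resp.\ $q^2,q^3\in R$ and $\mathfrak{m}q\subseteq R$), not by the isomorphisms $T\cong R\times R/\mathfrak{m}$ or $T\cong R(+)R/\mathfrak{m}$; those isomorphisms belong to the domain classification of \cite[Theorem~2.7]{dobbs2} and describe only the extensions that fail to be overrings. Concretely, $\mathbb{Z}+5i\mathbb{Z}\subset\mathbb{Z}[i]$ is an integral minimal extension of decomposed type (take $q=3+4i$ and $\mathfrak{m}=5\mathbb{Z}[i]$), and $K+X^2K[[X]]\subset K[[X]]$ is of ramified type, yet in both cases $T$ is an integral domain. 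So you cannot discard these types before using $(R:T)=0$. The repair reverses your order of deductions: in every one of the three types one has $T=R+Rq$ with $\mathfrak{m}q\subseteq R$ (in the inert case $\mathfrak{m}$ is itself an ideal of $T$), hence $\mathfrak{m}\subseteq(R:T)\subsetneq R$ and therefore $(R:T)=\mathfrak{m}$; then $(R:T)=0$ forces $(0)$ to be a maximal ideal, so $R$ is a field, and only at that point do the non-inert types collapse to $R\times R$ and $R[X]/(X^2)$ and get excluded by the domain hypothesis, leaving $T$ a minimal field extension of $R$.
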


\begin{proof}
Suppose first that $(R:T) = 0$. If $J = (0)$, then $R$ is a field. Since $T$ is a minimal ring extension of $R$ and $T$ is an integral domain, $T$ is a minimal field extension of $R$, by \cite[Lemme~1.2]{ferrand}. If $J\neq (0)$, then $(R:T)\subset J$. By \cite[Theorem~2.13]{lucy}, we have $R_0 = R_{(R:T)} = T_{J}$. Therefore, $T_{J}$ is the quotient field of $R$. Also by \cite[Th\'{e}or\`{e}me~2.2]{ferrand}, $R_J\subset T_{J}$ is a minimal ring extension, that is, $R_J$ is a maximal proper subring of $T_{J}$. Therefore, $R_J$ is a valuation ring of altitude one by \cite[Proposition~6, VI, 4.5]{bourbaki} and $T_{J}$ is its quotient field. Conversely, if $R$ is a field, then clearly $(R:T) = 0$. Also, if $R_J$ is a valuation ring of altitude one and $T_{J}$ is its quotient field, then $(R_J:T_{J}) = 0$. Note that $T$ cannot be integral over $R$ as $R_J$ is integrally closed. Since $R\subset T$ is a minimal ring extension, $R$ is integrally closed in $T$. Now, by \cite[Th\'{e}or\`{e}me~2.2(ii)]{ferrand}, there is no maximal ideal in $T$ lies over $J$. Therefore, by Theorem \ref{t7}, $(R_J:T_{J}) = (R:T)R_J$. Hence, $(R:T) = 0$.
\end{proof}

The next proposition is a modified version of the result $(3)$. Note that if $R$ is local, then $R_J = R$. Thus, our next proposition shows that $(3)$ is correct only if $R$ is local. 

\begin{Proposition}\label{p1}
Let $f:R\hookrightarrow T$ be a minimal ring homomorphism. If $f:R\hookrightarrow T$ is a flat epimorphism, then $R_J/(R_J:T_{J})$ is a one-dimensional local domain and $T_{J} = R_{(R:T)}$.
\end{Proposition}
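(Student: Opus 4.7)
The plan is to bootstrap the already-known local version (result (3)) via the crucial maximal ideal, and then patch the missing equality $T_J = R_{(R:T)}$ using Theorem \ref{t7}.

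First I would localize at $J$. By Ferrand's theorem \cite[Th\'eor\`eme~2.2]{ferrand} and \cite[Lemme~1.3]{ferrand} (as summarized in the introduction), $R_J \hookrightarrow T_J$ is itself a minimal ring extension, and since flat epimorphisms are stable under base change (or under localization of the base), $R_J \hookrightarrow T_J$ is also a flat epimorphism. The ring $R_J$ is local, so the local hypothesis needed in result (3) is satisfied by $R_J \subset T_J$. Applying (3) to $R_J \subset T_J$ yields that $R_J/(R_J:T_J)$ is a one-dimensional local domain, which is the first half of the conclusion.

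Next I would produce the equality $T_J = R_{(R:T)}$ without assuming $R$ is local. Since $R \hookrightarrow T$ is a flat epimorphism, Ferrand's dichotomy \cite[Th\'eor\`eme~2.2(ii)]{ferrand} tells us that no maximal ideal of $T$ lies over $J$. That is precisely the hypothesis needed to invoke Theorem \ref{t7}, which then gives $(R:T) \subset J$, $T_J = R_{(R:T)}$, and additionally $(R_J:T_J) = (R:T)R_J$. This closes the proof.

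The two halves of the argument are essentially independent: the one-dimensional local domain statement is obtained by reducing to the local case and invoking (3), while the identification $T_J = R_{(R:T)}$ is handled by Theorem \ref{t7}. I do not anticipate any genuine obstacle; the only point requiring care is that one must verify the flat-epimorphism property is inherited by the localized extension $R_J \hookrightarrow T_J$ before applying (3), and that Ferrand's dichotomy guarantees the hypothesis of Theorem \ref{t7} in the flat epimorphism case, so the two input results can be legitimately combined.
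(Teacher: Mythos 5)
Your proof is correct, and the second half (getting $T_J = R_{(R:T)}$ from Ferrand's Th\'eor\`eme~2.2(ii) plus Theorem~\ref{t7}) is exactly what the paper does. The difference is in the first half: you obtain the one-dimensional local domain statement by localizing at $J$ and invoking the \emph{local} case of result (3) as a black box, whereas the paper never cites (3) at all. Instead, the paper applies Theorem~\ref{t7} a second time, to the local minimal extension $R_J\subset T_J$ with crucial maximal ideal $JR_J$ (again checking via Ferrand~2.2(ii) that no maximal ideal of $T_J$ lies over $JR_J$), to get $\operatorname{height}(JR_J/(R_J:T_J))=1$ and hence one-dimensionality. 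Your route is legitimate -- the local version of (3) is acknowledged as correct in \cite{glaz1} and flat epimorphisms do localize -- but it is slightly awkward in spirit, since Proposition~\ref{p1} is precisely the repaired version of (3), so the paper's self-contained derivation from Theorem~\ref{t7} is preferable. Note also that your appeal to (3) is avoidable with what you already have in hand: your single application of Theorem~\ref{t7} to $R\subset T$ yields $(R:T)\in\operatorname{Spec}(R)$, $(R:T)\subset J$, $(R_J:T_J)=(R:T)R_J$, and $\operatorname{height}(J/(R:T))=1$, whence $R_J/(R_J:T_J)\cong (R/(R:T))_{J/(R:T)}$ is a local domain of dimension $\operatorname{height}(J/(R:T))=1$, giving the first half with no extra input.
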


\begin{proof}Since $R \subset T$ is a minimal ring extension, $T_{J}$ is a minimal ring extension of $R_{J}$, by \cite[Th\'{e}or\`{e}me~2.2]{ferrand}. Now, if $f:R\hookrightarrow T$ is a flat epimorphism, then by \cite[Th\'{e}or\`{e}me~2.2(ii)]{ferrand}, there is no maximal ideal in $T$ lies over $J$. Thus, by Theorem \ref{t7}, $(R_J:T_{J}) \in \text{Spec}(R_{J})$, $T_{J} = R_{(R:T)}$, $(R_J:T_{J}) = (R:T)R_J$, and $(R:T)\subset J$. This gives $(R_J:T_{J})\subset JR_{J}$. Since $R_J$ is a local ring, $JR_J$ is the crucial maximal ideal of the minimal ring extension $R_J\subset T_J$. Hence, by \cite[Th\'{e}or\`{e}me~2.2(ii)]{ferrand}, there is no maximal ideal in $T_{J}$ lies over $JR_{J}$. Now, by Theorem \ref{t7}, $\text{height}(JR_{J}/(R_J:T_{J})) = 1$. Therefore, $R_J/(R_J:T_{J})$ is a one-dimensional local domain.
\end{proof}

Our last theorem is a modified version of the result $(4)$. Note that if $R$ is local, then $R_J = R$. Thus, our next theorem shows that $(4)$ is correct only if $R$ is local.

\begin{Theorem}\label{t9}
Let $R\hookrightarrow T$ be a minimal ring homomorphism. Then $R\hookrightarrow T$ is a flat epimorphism if and only if $R_J/(R_J:T_{J})$ is a one-dimensional valuation ring and $T_{J}/(R_J:T_{J})$ is its quotient field.
\end{Theorem}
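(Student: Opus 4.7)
The plan is to reduce both directions of the equivalence to the local situation $R_J \subset T_J$, where the corrected (local) version of Glaz's Proposition 3.5 (statement $(4)$ in the introduction of this section, with the missing ``$R$ is local'' hypothesis restored) applies directly. The legitimacy of the reduction rests on Ferrand's Théorème 2.2, which guarantees that $R_J \hookrightarrow T_J$ is again a minimal ring extension whenever $R \subset T$ is.

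For the forward direction, suppose $R \hookrightarrow T$ is a (minimal) flat epimorphism. Localization at $J$ is flat and preserves ring epimorphisms, so $R_J \hookrightarrow T_J$ is again a flat epimorphism, and it is minimal by Ferrand. Since $R_J$ is local, the corrected local form of $(4)$ applies and yields at once that $R_J/(R_J:T_J)$ is a one-dimensional valuation ring with $T_J/(R_J:T_J)$ as its quotient field. Proposition \ref{p1} already gives the ``one-dimensional local domain'' half and the identification $T_J = R_{(R:T)}$; the valuation/quotient-field refinement is exactly what the local form of Glaz's result adds.

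For the converse, suppose $R_J/(R_J:T_J)$ is a one-dimensional valuation ring and $T_J/(R_J:T_J)$ is its quotient field. By Ferrand's dichotomy the minimal extension $R \subset T$ is either integral or a flat epimorphism, so it suffices to rule out the integral case. If $R \subset T$ were integral, then so would be $R_J \subset T_J$; quotienting by $(R_J:T_J)$, which is simultaneously an ideal of $R_J$ and of $T_J$, would give an integral extension $R_J/(R_J:T_J) \subset T_J/(R_J:T_J)$. But a valuation ring is integrally closed in its quotient field, which forces $R_J/(R_J:T_J) = T_J/(R_J:T_J)$, and hence $R_J = T_J$ since both contain the conductor. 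This contradicts the fact that $J$ is the crucial maximal ideal of $R \subset T$, where by definition $R_J \subsetneq T_J$. Therefore $R \hookrightarrow T$ must be a flat epimorphism.

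The only genuine subtlety is in the converse, namely confirming that $R_J/(R_J:T_J)$ really is integrally closed in $T_J/(R_J:T_J)$ in the sense required to contradict integrality; but this is automatic once the hypothesis tells us $T_J/(R_J:T_J)$ is its quotient field. Everything else is formal bookkeeping once the local (corrected) version of Glaz's result is available, and hence no hard step is expected.
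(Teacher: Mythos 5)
Your proof is correct, but it reaches the forward implication by a genuinely different route than the paper. In the converse you and the paper use the same essential idea (a one-dimensional valuation ring is integrally closed in its quotient field, so Ferrand's dichotomy forces the flat-epimorphism horn): the paper applies the dichotomy to the quotient extension $R_J/(R_J:T_J)\subset T_J/(R_J:T_J)$ and then lifts the conclusion back to $R_J\subset T_J$ and to $R\subset T$, whereas you apply it to $R\subset T$ directly and rule out integrality by descending to the quotient for the contradiction; your version makes the lifting step more transparent. The real divergence is in the forward direction. The paper builds it from its own salvaged ingredients: Theorem \ref{t7} shows $(R_J:T_J)$ is maximal in $T_J$ (so $T_J/(R_J:T_J)$ is a field), \cite[Theorem~2.7]{lucy} shows the quotient extension is still minimal, and then Theorem \ref{t8} together with Proposition \ref{p1} applied to this zero-conductor extension delivers the valuation/quotient-field conclusion. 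You instead localize at $J$ and invoke the corrected local form of \cite[Proposition~3.5]{glaz} wholesale. That is logically admissible, since \cite{glaz1} confirms the local hypothesis was omitted only by a misprint, and localization does preserve minimality (by Ferrand) and flat epimorphisms (by base change). But be aware that the ``corrected local form of (4)'' is precisely the local case of the theorem you are proving, so your argument is non-circular only because that local case has an independent external proof; and in a section whose entire purpose is to repair misstated results in the literature, the paper's choice to re-derive the local content from Theorems \ref{t7} and \ref{t8} rather than cite it is the more defensible one. What your approach buys is brevity and a clean reduction-to-the-local-case template; what the paper's buys is self-containedness and an actual proof of the local statement rather than a citation of it.
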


\begin{proof}
Let $R\hookrightarrow T$ be a flat epimorphism. Then by \cite[Th\'{e}or\`{e}me~2.2]{ferrand}, $R_J\subset T_J$ is a minimal ring extension and there is no maximal ideal in $T$ lies over $J$. Therefore, Theorem \ref{t7} yields that $T_{J}/(R_J:T_{J})$ is a field. Now, by \cite[Theorem~2.7]{lucy}, $R_J/(R_J:T_{J}) \subset T_{J}/(R_J:T_{J})$ is a minimal ring extension. Since $(R_{J}/(R_J:T_{J}):~T_{J}/(R_J:T_{J})) = 0$, we conclude that $R_J/(R_J:T_{J})$ is a one-dimensional valuation ring and $T_{J}/(R_J:T_{J})$ is its quotient field, by Theorem \ref{t8} and Proposition \ref{p1}. Conversely, assume that $R_J/(R_J:T_{J})$ is a one-dimensional valuation ring and $T_{J}/(R_J:T_{J})$ is its quotient field. Then $R_J/(R_J:T_{J})$ is integrally closed in $T_{J}/(R_J:T_{J})$. Thus, Ferrand's dichotomy \cite[Th\'{e}or\`{e}me~2.2]{ferrand} gives that $R_J/(R_J:T_{J}) \hookrightarrow T_{J}/(R_J:T_{J})$ is a flat epimorphism. Therefore, $R_J\hookrightarrow T_{J}$ is a flat epimorphism and hence so is $R\hookrightarrow T$.
\end{proof}
 
\begin{Remark}
There is an error in the proof of \cite[Theorem~3.7]{dobbs3}. Note that $R$ is not local in \cite[Theorem~3.7]{dobbs3} but the proof of \cite[Theorem~3.7]{dobbs3} is citing \cite[Proposition~3.5]{glaz} which is true for local rings only. The error in the proof arises because the authors used \cite[Proposition~3.5]{glaz} to prove that $(R/P)_{M/P}$ is a valuation domain in $(1)\Rightarrow (3)$. But as we have seen earlier, \cite[Proposition~3.5]{glaz} is valid for local rings only. Thus, the proof of \cite[Theorem~3.7]{dobbs3} is not correct. Note that in \cite[Theorem~3.7]{dobbs3}, we have $(R/P)_{M/P} \cong R_M/PR_M$ where $P = (R:T)$ and $M$ is the crucial maximal ideal of the minimal ring extension $R\subset T$. Thus, by Theorem $\ref{t9}$, $(R/P)_{M/P}$ is a valuation domain and hence \cite[Theorem~3.7]{dobbs3} holds.
\end{Remark}

\section*{\bf Acknowledgments}
The authors thank Prof. Alok K. Maloo for providing Example \ref{e2} and for 
the stimulating discussions. The first author was supported by a grant from UGC India, Sr.
No. 2061440976. The second author was supported by the MATRICS grant from DST-SERB, No. MTR/2018/000707.


\begin{thebibliography}{00000}
\bibitem{azarang1} A. Azarang, On maximal subrings, Far East J. Math. Sci. (FJMS) 32 (2009), no. 1, 107-118.

\bibitem{bourbaki} N. Bourbaki, Commutative algebra, Addison-Wesley Publishing Co., Reading, Mass., 1972.

\bibitem{dobbs1} D. E. Dobbs, Every commutative ring has a minimal ring extension, Comm. Algebra 34 (2006) 3875-3881.

\bibitem{dobbs2} D. E. Dobbs and J. Shapiro, A classification of the minimal ring extensions of an integral domain, J. Algebra 305 (2006) 185-193.

\bibitem{dobbs3} D. E. Dobbs and J. Shapiro, A classification of the minimal ring extensions of certain commutative rings, J. Algebra 308 (2007) 800-821.

\bibitem{dobbs4} D. E. Dobbs, Recent progress on minimal ring extensions and related concepts, Commutative Rings: New Research, Nova Science, New York, 2009 pp. 13-38.

\bibitem{lucy} L.I. Dech\'{e}ne, Adjacent extensions of rings, PhD dissertation, University of California, Riverside, 1978.

\bibitem{ferrand} D. Ferrand and J.-P. Olivier, Homomorphismes minimaux d'anneaux, J. Algebra 16 (1970) 461-471.

\bibitem{nagata} M. Nagata, Local rings, Interscience, New York, 1962.

\bibitem{glaz} G. Picavet and M. Picavet-L' Hermitte, About minimal morphisms, in: Multiplicative Ideal Theory: A Tribute to the Work of Robert Gilmer, Springer-Verlag, New York, 2006 pp. 369-386.

\bibitem{glaz1} G. Picavet and M. Picavet-L' Hermitte, Quasi-Pr\"ufer extensions of rings, in: Fontana M., Frisch S., Glaz S., Tartarone F., Zanardo P. (eds) Rings, Polynomials, and Modules, Springer, Cham, 2017 pp. 307-336.



\end{thebibliography}

\end{document}